\def\N{\mathbb N}
\def\R{\mathbb R}
\providecommand{\norm}[1]{\left\lVert#1\right\rVert}
\providecommand{\ip}[2]{\left\langle #1, #2 \right\rangle}
\newcommand{\F}{\mathrm F}
\numberwithin{equation}{section}
\theoremstyle{plain}
\newtheorem{theorem}{Theorem}[section]
\newtheorem{lemma}[theorem]{Lemma}
\newtheorem{corollary}[theorem]{Corollary}
\theoremstyle{remark}
\newtheorem{remark}[theorem]{Remark}
\title[Strongly quasinonexpansive mappings, II]
{Strongly quasinonexpansive mappings, II}
\author{Koji Aoyama}
\address[K.~Aoyama]{%
Department of Economics, Chiba University, 
Yayoi-cho, Inage-ku, Chi\-ba-shi, Chiba, 263-8522 Japan}
\email{aoyama@le.chiba-u.ac.jp}
\author{Kei Zembayashi}
\address[K.~Zembayashi]{%
Keio Girls Senior High School,
Mita, Minato-ku, Tokyo, 108-0073 Japan}
\email{zenbayashi@gshs.keio.ac.jp}
\keywords{Strongly quasinonexpansive mapping, 
strictly quasinonexpansive mapping, 
quasinonexpansive mapping, fixed point}
\subjclass[2010]{47H09, 47H10, 41A65}
\begin{document}

\begin{abstract}
 This paper is devoted to the study of strongly quasinonexpansive
 mappings in an abstract  space and a Banach space. 
\end{abstract}

\maketitle

\section{Introduction}

One of the authors introduced the notion of strongly quasinonexpansive
mappings in a metric space in~\cite{pNACA2015}.
The notion is analogous to strong nonexpansiveness introduced in
Bruck and Reich~\cite{MR0470761}. 
Indeed, a strongly nonexpansive mapping in the sense of~\cite{MR0470761}
with a fixed point is strongly quasinonexpansive in the sense
of~\cite{pNACA2015} in the framework of Banach space. 

On the other hand, 
a mapping of type~(sr) was studied in~\cites{MR2884574, 
MR2529497, 
MR3258665} 
in a Banach space. 
Such a mapping is also analogous to a strongly nonexpansive mapping
in~\cite{MR0470761}; see also~Reich~\cite{MR1386686}.
However, 
a mapping of type~(sr) is different from 
a strongly quasinonexpansive mapping in the sense of~\cite{pNACA2015}. 

In this paper, in order to unify strong quasinonexpansiveness as above, 
we introduce and study a quasinonexpansive mapping, 
a strictly quasinonexpansive mapping,
and a strongly quasinonexpansive mapping in an abstract space.
In particular, we give some characterizations and
basic properties of such quasinonexpansive mappings.
Then, using these results, we obtain characterizations and properties
of mappings of type~(sr) in the sense of~\cites{MR2884574, 
MR2529497, 
MR3258665}. 

\section{Preliminaries}
Throughout this paper, 
$\R_+$ denotes the set of nonnegative real numbers, 
$\N$ the set of positive integers, 
$E$ a real Banach space, 
$E^*$ the dual of $E$, $\norm{\,\cdot\,}$ the norms of $E$ and $E^*$, 
$\ip{x}{x^*}$ the value of $x^*\in E^*$ at $x\in E$, 
and $J$ the duality mapping of $E$ into $2^{E^*}$, that is,
$Jx = \bigl\{ x^*\in E^* :
\ip{x}{x^*} = \norm{x}^2 = \norm{x^*}^2 \bigr \}$ for $x\in E$. 

It is known that 
the duality mapping $J$ is single-valued if $E$ is smooth;
$J$ is injective if $E$ is strictly convex,
that is, $Jx\cap Jy=\emptyset$ for all $x,y\in E$ with $x\ne y$;
see \cite{MR1864294} for more details. 
It is also known that every uniformly convex Banach space is
strictly convex. 

Let $X$ be a metric space with metric $d$ and $C$ a nonempty subset of
$X$.
A mapping $T\colon C \to X$ is said to be quasinonexpansive if $\F(T)$
is nonempty and $d(z,Tx) \leq d(z,x)$ for all $z \in \F(T)$ and
$x \in C$, where $\F(T)$ is the fixed point set of $T$, that is, 
$\F(T) = \{ z \in C: z = Tz\}$; 
$T$ is said to be strictly quasinonexpansive if
$\F(T)$ is nonempty and $d(z,Tx) < d(z,x)$ for all $z \in \F(T)$ and
$x \in C \setminus \F(T)$; 
$T$ is said to be strongly quasinonexpansive \cite{pNACA2015}
if $\F(T)$ is nonempty and for any $z \in F(T)$, $M > 0$, and 
$\epsilon > 0$ there exists $\delta > 0$ such that 
\[
 x \in C,\, d(z,x) \leq M,\, 
  d(z,x) - d(z,Tx) < \delta \Rightarrow d(Tx,x)< \epsilon.  
\]

\section{Lemmas}

In this section, we deal with two lemmas,
which play a central role in providing characterizations of 
strongly quasinonexpansive mappings in the next section. 

Throughout this section, we assume that $D$ is a nonempty set,
both $f$ and $g$ are bounded (above) functions of $D$ into $\R_+$,
and $\alpha = \sup\{f(x): x \in D\}$.

\begin{lemma}\label{lemma:gamma_well-defined}
 Suppose that $\alpha > 0$ and $\gamma(t)$ is defined by 
 \begin{equation} \label{eqn:def_gamma}
  \gamma (t) = \inf \{ g(x): x \in D,\, f(x) \geq t\}
 \end{equation}
 for $t \in [0, \alpha)$. Then the following hold: 
 \begin{enumerate}
  \item $\gamma$ is a nondecreasing bounded function of $[0,\alpha)$
	into $\R_+$; 
  \item if $x\in D$ and $f(x) \in [0, \alpha)$, then 
	$\gamma \bigl( f(x) \bigr) \leq g(x)$; 
  \item $\lim_{t \uparrow \alpha} \gamma(t)
	= \sup \{ \gamma (t): t \in [0,\alpha)\}< \infty$. 
 \end{enumerate}
\end{lemma}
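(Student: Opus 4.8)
The plan is to first check that $\gamma$ is well-defined and bounded, and then to read off the monotonicity, the pointwise bound~(2), and the limit~(3) essentially from the definition. The one genuinely load-bearing observation is that for every $t \in [0,\alpha)$ the index set $\{x \in D : f(x) \geq t\}$ over which the infimum is taken is nonempty; everything else is routine once this is in place.

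First I would fix $t \in [0,\alpha)$ and argue nonemptiness. Since $\alpha = \sup\{f(x) : x \in D\}$ and $t < \alpha$, the number $t$ is not an upper bound for the range of $f$, so there exists $x \in D$ with $f(x) > t$, and in particular $f(x) \geq t$. Thus the index set is nonempty, and since $g$ maps into $\R_+$ and is bounded above, the infimum defining $\gamma(t)$ is a well-defined real number with $0 \leq \gamma(t) \leq \sup\{g(x) : x \in D\} =: \beta < \infty$. This simultaneously shows that $\gamma$ is a well-defined function of $[0,\alpha)$ into $\R_+$ and that it is bounded, which is half of~(1).

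For the monotonicity in~(1), I would take $0 \leq s \leq t < \alpha$ and note the inclusion $\{x \in D : f(x) \geq t\} \subseteq \{x \in D : f(x) \geq s\}$; taking the infimum of $g$ over the larger set can only decrease it, so $\gamma(s) \leq \gamma(t)$. Part~(2) is then immediate: given $x \in D$ with $f(x) \in [0,\alpha)$, set $t = f(x)$, so that $x$ itself satisfies $f(x) \geq t$ and hence belongs to the index set for $\gamma(t)$; the definition of the infimum gives $\gamma(f(x)) = \gamma(t) \leq g(x)$.

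Finally, for~(3) I would invoke the standard fact that a nondecreasing real function bounded above has a left limit at the right endpoint equal to its supremum. Concretely, writing $L = \sup\{\gamma(t) : t \in [0,\alpha)\}$, which is finite by the bound $\gamma \leq \beta$, I would fix $\epsilon > 0$, choose $t_0 \in [0,\alpha)$ with $\gamma(t_0) > L - \epsilon$, and use monotonicity to obtain $L - \epsilon < \gamma(t_0) \leq \gamma(t) \leq L$ for all $t \in [t_0,\alpha)$, giving $\lim_{t \uparrow \alpha}\gamma(t) = L$. The main (and really only) obstacle is the nonemptiness step at the outset; the remaining arguments are formal manipulations of infima and suprema.
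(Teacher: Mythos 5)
Your proposal is correct and follows essentially the same route as the paper: the key point in both is that for $t<\alpha$ the index set $\{x\in D: f(x)\geq t\}$ is nonempty by the definition of $\alpha$ as a supremum, after which (1), (2), and (3) are routine consequences of the definition of the infimum and the monotone-limit fact. The paper merely leaves those routine verifications implicit, whereas you spell them out.
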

\begin{proof}
 Let $t \in [0,\alpha)$ be fixed. 
 Then, by the definition of $\alpha$, 
 there exists $x\in D$ such that $f(x) > t$.  
 Thus $\{ g(x): x \in D,\, f(x) \geq t\}$ is nonempty, and hence
 $\gamma$ is a real-valued function defined on $[0,\alpha)$. 
 The conclusions (1) and (2) follow from the definition of $\gamma$;
 (3) follows from (1). 
 This completes the proof. 
\end{proof}

Using Lemma~\ref{lemma:gamma_well-defined}, we obtain the following: 

\begin{lemma}\label{lm:equiv}
 The following are equivalent: 
 \begin{enumerate}
  \item For any $\epsilon > 0$ there exists $\delta > 0$ such that
	\[
	 x \in D,\, g(x) < \delta \Rightarrow f(x) < \epsilon; 
	\]
  \item $f(x_n) \to 0$ whenever $\{x_n\}$ is a sequence in $D$ and
	$g(x_n) \to 0$; 
  \item there exists a nondecreasing bounded function $\gamma $ of
	$[0,\alpha]$ into $\R_+$ such that 
	$\gamma \bigl( f(x) \bigr) \leq g(x)$ for all $x \in D$ and 
	$\gamma(t)>0$ for all $t \in (0,\alpha]$. 
 \end{enumerate}
\end{lemma}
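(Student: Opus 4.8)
The plan is to establish the cyclic chain $(1) \Rightarrow (2) \Rightarrow (3) \Rightarrow (1)$. Since Lemma~\ref{lemma:gamma_well-defined} requires $\alpha > 0$, I would first dispose of the degenerate case $\alpha = 0$: here $f \equiv 0$, so (1) and (2) hold trivially, and for (3) the interval $(0,\alpha]$ is empty while one may take $\gamma(0) = \inf_{x \in D} g(x)$. Thus I assume $\alpha > 0$ throughout, so that Lemma~\ref{lemma:gamma_well-defined} is available.

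For $(1) \Rightarrow (2)$, I would argue directly: given a sequence $\{x_n\}$ in $D$ with $g(x_n) \to 0$ and any $\epsilon > 0$, choose the $\delta$ provided by (1); since $g(x_n) < \delta$ for all large $n$, we get $f(x_n) < \epsilon$ eventually, whence $f(x_n) \to 0$. This step is routine.

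The heart of the argument is $(2) \Rightarrow (3)$, where I would build the desired comparison function from the $\gamma$ of Lemma~\ref{lemma:gamma_well-defined}. That lemma already supplies a nondecreasing bounded $\gamma$ on $[0,\alpha)$ satisfying $\gamma(f(x)) \leq g(x)$ whenever $f(x) < \alpha$ (part (2)), together with a finite limit $\lim_{t \uparrow \alpha} \gamma(t)$ (part (3)). I would extend $\gamma$ to $[0,\alpha]$ by setting $\gamma(\alpha) = \lim_{t \uparrow \alpha} \gamma(t)$, which keeps $\gamma$ nondecreasing and bounded. Two properties then need checking. For the domination $\gamma(f(x)) \leq g(x)$ at a point with $f(x) = \alpha$ (the case not reached by Lemma~\ref{lemma:gamma_well-defined}(2)), I note that $f(x) = \alpha \geq t$ for every $t < \alpha$, so $\gamma(t) \leq g(x)$ for all such $t$, and letting $t \uparrow \alpha$ gives $\gamma(\alpha) \leq g(x)$. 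For positivity, suppose $\gamma(t) = 0$ for some $t \in (0,\alpha)$; then by definition of the infimum there is a sequence $\{x_n\}$ with $f(x_n) \geq t$ and $g(x_n) \to 0$, and (2) forces $f(x_n) \to 0$, contradicting $f(x_n) \geq t > 0$. Hence $\gamma(t) > 0$ on $(0,\alpha)$, and since $\gamma$ is nondecreasing, $\gamma(\alpha) \geq \gamma(t) > 0$ as well.

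Finally, for $(3) \Rightarrow (1)$ I would reverse this: given $\epsilon > 0$, if $\epsilon > \alpha$ then $f(x) \leq \alpha < \epsilon$ for every $x$ and any $\delta$ works; otherwise $\epsilon \in (0,\alpha]$ and I set $\delta = \gamma(\epsilon) > 0$. If $g(x) < \delta$, then $\gamma(f(x)) \leq g(x) < \gamma(\epsilon)$, so $f(x) < \epsilon$, since $f(x) \geq \epsilon$ would give $\gamma(f(x)) \geq \gamma(\epsilon)$ by monotonicity. I expect the main obstacle to be the endpoint bookkeeping in $(2) \Rightarrow (3)$: correctly extending $\gamma$ to $t = \alpha$ and verifying both the domination and the positivity there, which is precisely where Lemma~\ref{lemma:gamma_well-defined}(3) and the monotonicity enter.
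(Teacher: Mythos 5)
Your proof is correct, and it traverses the cycle in the opposite order from the paper: you prove $(1)\Rightarrow(2)\Rightarrow(3)\Rightarrow(1)$, while the paper proves $(1)\Rightarrow(3)$, $(3)\Rightarrow(2)$, $(2)\Rightarrow(1)$. The hard step is the same in both: construct $\gamma$ from the infimum formula \eqref{eqn:def_gamma} of Lemma~\ref{lemma:gamma_well-defined}, extend it to $t=\alpha$ by the left limit, check domination at points with $f(x)=\alpha$ by letting $t\uparrow\alpha$, and establish positivity by contradiction using a sequence $\{y_n\}$ with $f(y_n)\geq t$ and $g(y_n)\to 0$. The only difference there is which hypothesis absorbs that sequence — you feed it into the sequential condition (2), the paper into the $\epsilon$-$\delta$ condition (1) — and these are interchangeable. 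Where you genuinely diverge is in the remaining legs: your $(3)\Rightarrow(1)$ is a direct argument with the explicit choice $\delta=\gamma(\epsilon)$ (after splitting off $\epsilon>\alpha$), and your $(1)\Rightarrow(2)$ is a direct limit argument, whereas the paper handles both of its corresponding implications by contradiction with subsequences. Your route is slightly more economical and makes the quantitative content of (3) visible, exhibiting the modulus $\delta$ explicitly; the paper's order instead makes (3) the pivot from which the sequential characterization (2) falls out immediately. Your bookkeeping at the degenerate case $\alpha=0$ and at the endpoint $t=\alpha$ is also correct.
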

\begin{proof}
 We first show that (1) implies (3). 
 Suppose that $\alpha = 0$.
 In this case, the conclusion is clear by setting $\gamma(0)=0$.
 Next we suppose that $\alpha > 0$.
 Let $\gamma$ be a function of $[0,\alpha]$ into $\R_+$ 
 defined by~\eqref{eqn:def_gamma} for $t \in [0,\alpha)$ and
 $\gamma (\alpha) = \lim_{t \uparrow \alpha} \gamma(t)$. 
 Then it follows from Lemma~\ref{lemma:gamma_well-defined} that
 $\gamma$ is well defined and nondecreasing, and moreover,
 $\gamma \bigl( f(x) \bigr) \leq g(x)$ for all $x \in D$ with
 $f(x) \in [0,\alpha)$. 
 If $x \in D$ and $f(x) = \alpha$, then it is obvious that 
 $\gamma(t) \leq g(x)$ for all $t \in [0,\alpha)$ and hence
 \[
 \gamma(\alpha) = \lim_{t \uparrow \alpha} \gamma(t)
 = \sup \{\gamma (t): t \in [0, \alpha)\} \leq g(x). 
 \]
 Therefore, $\gamma \bigl( f(x) \bigr) \leq g(x)$ for all $x \in D$. 
 We finally show that $\gamma(t) > 0$ for all $t \in (0,\alpha]$. 
 Suppose that there exists $t \in (0,\alpha]$ such that $\gamma(t)=0$.
 Without loss of generality, we may assume that $t \ne \alpha$.
 By the definition of $\gamma$, there exists a sequence $\{y_n\}$ in $D$
 such that $f(y_n) \geq t$ and $g(y_n) < 1/n$ for all $n \in \N$. 
 On the other hand, by assumption, there exists $\delta > 0$ such that
 \[
  x \in D,\, g(x) < \delta \Rightarrow f(x) < t/2.
 \]
 Choosing $m \in \N$ with $1/m < \delta$,
 we have $t \leq f(y_m) \leq t/2$, which is a contradiction. 
 Therefore, $\gamma(t) > 0$ for all $t \in (0,\alpha]$. 

 We next show that (3) implies (2). 
 Let $\{x_n\}$ be a sequence in $D$ and suppose that $g(x_n) \to 0$
 and $f(x_n) \not\to 0$.
 Then there exist $\epsilon >0$ and a subsequence $\{x_{n_i}\}$ of
 $\{x_n\}$ such that $f(x_{n_i}) \geq \epsilon$ for all $i \in \N$. 
 Thus, by assumption, it follows that
 \[
 0 < \gamma (\epsilon) \leq \gamma \bigl( f(x_{n_i}) \bigr)
 \leq g(x_{n_i}) \to 0
 \]
 as $i \to \infty$, which is a contradiction.
 Therefore, $f(x_n) \to 0$. 

 We finally show that (2) implies (1). 
 Suppose that the conclusion does not hold.
 Then there exist $\epsilon > 0$ and a sequence $\{x_n\}$ in $D$ such
 that $g(x_n) < 1/n$ and $f(x_n) \geq \epsilon$ for all $n \in \N$. 
 Thus $g(x_n) \to 0$ and hence, by assumption, $f(x_n) \to 0$,
 which is a contradiction. 
\end{proof}

\section{Strongly quasinonexpansive mappings in an abstract space}

In this section, we introduce and study a quasinonexpansive mapping, 
a strictly quasinonexpansive mapping,
and a strongly quasinonexpansive mapping in an abstract space, 
which is a generalization of a metric space. 

Throughout this section, $X$ denotes a nonempty set, 
$\sigma$ a function of $X \times X$ into $\R_+$, 
and $\bar{B}(z, M)$ a subset of $X$ defined by
\[
 \bar{B}(z, M) = \{x \in X: \sigma(z,x) \leq M\}
\]
for $z \in X$ and $M > 0$. 

We say that the pair $(X,\sigma)$ satisfies the condition (S) if 
\[
 x \ne y \Leftrightarrow \sigma (x,y) > 0  
\]
for all $x,y \in X$; 
$(X,\sigma)$ satisfies the condition (B) if 
\begin{equation}\label{eq:condition(B)}
 \sup \{ \sigma(x,y): x,y \in \bar{B}(z,M) \} < \infty
\end{equation}
for all $z \in X$ and $M>0$; 
$(X,\sigma)$ satisfies the condition~(T) if for any $u\in X$, $M>0$, and
$\epsilon >0$ there exists $\eta >0$ such that
\begin{equation}\label{eq:Condition-T}
 x,y,z \in \bar{B}(u,M),\, \sigma(x,y) < \eta ,\, \sigma(y,z) < \eta
  \Rightarrow \sigma(x,z) < \epsilon. 
\end{equation}
It is clear that if $(X,\sigma)$ satisfies the condition~(S), then
$z \in \bar{B}(z,M)$, and hence $\bar{B}(z,M)$ is nonempty
for all $z \in X$ and $M>0$. 
It is also clear that $(X,\sigma)$ satisfies the condition~(T) if and
only if $\sigma(x_n,z_n) \to 0$ whenever 
$\{x_n\}$, $\{y_n\}$, and $\{z_n\}$ are sequences in $\bar{B}(u,M)$
for some $u \in X$ and $M>0$ such that
$\sigma(x_n,y_n) \to 0$ and $\sigma(y_n,z_n) \to 0$. 

\begin{remark}
 Suppose that $\sigma$ is a metric on $X$, that is, 
 $(X,\sigma)$ is a metric space.
 Then it is obvious that $(X,\sigma)$ satisfies the conditions~(S), (B),
 and~(T). 
\end{remark}

Let $C$ be a nonempty subset of $X$, $T$ a mapping of $C$ into $X$,
and $\F(T)$ the fixed point set of $T$. 
Inspired by~\cites{pNACA2015,MR667060,MR2884574,MR3213161,MR2529497}, 
we introduce the following: 
$T$ is said to be \emph{quasinonexpansive} (with respect to $\sigma$)
if $\F(T)$ is nonempty and
$\sigma(z, Tx) \leq \sigma(z,x)$ for all $z \in \F(T)$ and $x \in C$; 
$T$ is said to be \emph{strictly quasinonexpansive} (with respect to
$\sigma$) if $\F(T)$ is nonempty and 
$\sigma(z, Tx) < \sigma(z,x)$ for all $z \in \F(T)$ and $x \in C
\setminus \F(T)$; 
$T$ is said to be \emph{strongly quasinonexpansive} (with respect to
$\sigma$)
if $\F(T)$ is nonempty and for any $z \in \F(T)$, $M > 0$, and 
$\epsilon > 0$ there exists $\delta > 0$ such that 
\begin{equation}\label{e:def-QNSinX}
 x \in C\cap \bar{B}(z, M),\, 
  \sigma(z,x) - \sigma(z,Tx) < \delta \Rightarrow \sigma(Tx,x)< \epsilon. 
\end{equation}

It is clear from the definitions that
every strictly quasinonexpansive mapping is quasinonexpansive,
and moreover, we obtain the following: 

\begin{lemma}\label{lm:strong->stric}
 Let $C$ be a nonempty subset of $X$ and 
 $T$ a strongly quasinonexpansive mapping of $C$ into $X$. 
 Suppose that $(X,\sigma)$ satisfies the condition~(S).  
 Then $T$ is (strictly) quasinonexpansive.
\end{lemma}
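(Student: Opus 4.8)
The plan is to prove the apparently stronger conclusion that $T$ is strictly quasinonexpansive; the plain quasinonexpansiveness then comes for free, since (as already noted in the excerpt) every strictly quasinonexpansive mapping is quasinonexpansive. Indeed, for $x \in \F(T)$ one has $Tx = x$ and hence $\sigma(z,Tx) = \sigma(z,x)$, while for $x \in C \setminus \F(T)$ the strict inequality gives $\sigma(z,Tx) < \sigma(z,x)$; together these yield $\sigma(z,Tx) \leq \sigma(z,x)$ for all $x \in C$. So I fix $z \in \F(T)$ and $x \in C \setminus \F(T)$ and aim to show $\sigma(z,Tx) < \sigma(z,x)$.

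First I would extract the data that feed into the definition of strong quasinonexpansiveness. Since $x \in C \setminus \F(T)$, we have $Tx \ne x$, so condition~(S) yields $\sigma(Tx,x) > 0$; set $\epsilon = \sigma(Tx,x)$. Likewise, $z \in \F(T)$ and $x \notin \F(T)$ force $z \ne x$, so condition~(S) gives $\sigma(z,x) > 0$; set $M = \sigma(z,x)$, which is then positive and guarantees $x \in \bar{B}(z,M)$. Feeding this $z$, $M$, and $\epsilon$ into the definition of strong quasinonexpansiveness produces a $\delta > 0$ for which the implication~\eqref{e:def-QNSinX} holds.

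The core of the argument is then a contradiction. Suppose $\sigma(z,Tx) \geq \sigma(z,x)$. Then $\sigma(z,x) - \sigma(z,Tx) \leq 0 < \delta$, and since $x \in C \cap \bar{B}(z,M)$, the implication~\eqref{e:def-QNSinX} forces $\sigma(Tx,x) < \epsilon = \sigma(Tx,x)$, which is absurd. Hence $\sigma(z,Tx) < \sigma(z,x)$, as desired, and $T$ is strictly quasinonexpansive.

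I do not anticipate a serious obstacle, as the argument is a direct unwinding of the definition. The one point that must be handled with care is the dual role of condition~(S): it is used once to ensure that the target tolerance $\epsilon = \sigma(Tx,x)$ is strictly positive (so that applying the definition of strong quasinonexpansiveness is nontrivial) and once to ensure $M = \sigma(z,x) > 0$ (so that $\bar{B}(z,M)$ is a legitimate choice and in fact contains $x$). Without condition~(S) one could not rule out $\sigma(Tx,x) = 0$ even though $Tx \ne x$, and the final contradiction would collapse.
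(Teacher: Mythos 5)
Your proof is correct and is essentially the paper's own argument: both choose $M=\sigma(z,x)$ and $\epsilon=\sigma(Tx,x)$, invoke condition~(S) to ensure these are positive, and derive the contradiction $\sigma(Tx,x)<\sigma(Tx,x)$ from the implication~\eqref{e:def-QNSinX} under the assumption $\sigma(z,Tx)\geq\sigma(z,x)$. The only difference is presentational (you fix an arbitrary pair and argue by contradiction on the inequality, while the paper negates strict quasinonexpansiveness at the outset), which changes nothing of substance.
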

\begin{proof}
 Suppose that $T$ is not strictly quasinonexpansive, that is, 
 there exist $z \in \F(T)$ and $y \in C \setminus \F(T)$
 such that $\sigma(z,Ty) \geq \sigma(z,y)$. 
 Then clearly $z \ne y$ and $Ty \ne y$. 
 Set $M = \sigma(z,y)$ and $\epsilon = \sigma(Ty,y)$.
 Taking into account the condition~(S),
 we see that $M >0$ and $\epsilon > 0$.
 Since $T$ is strongly quasinonexpansive, there exists $\delta > 0$ such
 that \eqref{e:def-QNSinX} holds. 
 Therefore, $\sigma(Ty, y) < \epsilon$, which is a contradiction. 
\end{proof}

In order to prove the next theorem, we need the following lemma: 

\begin{lemma}\label{lm:f,g:bdd}
 Let $C$ be a nonempty subset of $X$, $T$ a quasinonexpansive mapping
 of $C$ into $X$, $z\in \F(T)$, and $M> 0$. 
 Suppose that $(X,\sigma)$ satisfies the condition~(B). 
 Let $f$ and $g$ be functions defined by
 \begin{equation}\label{eq:def_f,g}
 f(x) = \sigma(Tx,x) \text { and }
 g(x) = \sigma(z,x) - \sigma(z,Tx)  
 \end{equation}
 for $x \in D$, where $D=C\cap \bar{B}(z,M)$.
 Then $f$ and $g$ are bounded functions of $D$ into $\R_+$. 
\end{lemma}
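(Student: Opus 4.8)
The plan is to check the two claimed properties of each function separately: that $f$ and $g$ are $\R_+$-valued, and that each is bounded above. All of this is a direct unwinding of the definitions, and the single substantive point is that the image $Tx$ lands back inside the ball $\bar{B}(z,M)$, which is what makes the condition~(B) applicable. I would begin with the nonnegativity. Since $\sigma$ maps into $\R_+$ by assumption, $f(x) = \sigma(Tx,x) \geq 0$ is immediate. For $g$, the quasinonexpansiveness of $T$ gives $\sigma(z,Tx) \leq \sigma(z,x)$ for every $x \in C$, so in particular $g(x) = \sigma(z,x) - \sigma(z,Tx) \geq 0$ on $D$. Hence both $f$ and $g$ take values in $\R_+$.

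Bounding $g$ is then routine: for $x \in D = C \cap \bar{B}(z,M)$ we have $\sigma(z,x) \leq M$, and since $\sigma(z,Tx) \geq 0$ it follows that $g(x) = \sigma(z,x) - \sigma(z,Tx) \leq \sigma(z,x) \leq M$, so $g$ is bounded above by $M$. The boundedness of $f$ is the one step that uses the hypotheses essentially, and I would handle it by first establishing $Tx \in \bar{B}(z,M)$. Indeed, for $x \in D$ we have $x \in \bar{B}(z,M)$, so quasinonexpansiveness yields $\sigma(z,Tx) \leq \sigma(z,x) \leq M$, which is exactly the statement $Tx \in \bar{B}(z,M)$. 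With both $x$ and $Tx$ lying in $\bar{B}(z,M)$, condition~(B) provides that $\sup\{\sigma(a,b) : a,b \in \bar{B}(z,M)\} < \infty$, and $f(x) = \sigma(Tx,x)$ is dominated by this supremum. This gives the desired bound on $f$ and completes the argument.

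I do not anticipate a genuine obstacle, since the proof is purely a matter of combining the definitions with the two hypotheses. The point worth stating carefully is that quasinonexpansiveness is invoked twice and for two different purposes: once to keep $Tx$ inside the ball so that condition~(B) can be applied to $\sigma(Tx,x)$, and once to guarantee $g \geq 0$. Together these ensure that $f$ and $g$ are legitimately bounded functions of $D$ into $\R_+$, as claimed.
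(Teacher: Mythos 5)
Your proof is correct and follows essentially the same route as the paper: quasinonexpansiveness gives $0 \leq g(x) \leq \sigma(z,x) \leq M$ and $Tx \in \bar{B}(z,M)$, after which condition~(B) bounds $f(x) = \sigma(Tx,x)$ by $\sup\{\sigma(a,b) : a,b \in \bar{B}(z,M)\} < \infty$. No gaps.
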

\begin{proof} 
 Since $T$ is quasinonexpansive, it follows that
 $0 \leq g(x) \leq \sigma(z,x) \leq M$ and $Tx \in \bar{B}(z,M)$ for all
 $x \in D$. Hence $g$ is a bounded function of $D$ into $\R_+$.
 Moreover, by the condition (B), we have 
 \[
 \sup \{ f(x): x \in D \} 
 \leq \sup \{ \sigma(x,y): x,y \in \bar{B}(z,M) \} < \infty. 
 \]
 Thus $f$ is a bounded function of $D$ into $\R_+$.
\end{proof}

We obtain the following characterization of strongly quasinonexpansive
mappings. 

\begin{theorem}\label{lm:SQNinX-equiv}
 Let $C$ be a nonempty subset of $X$ and $T$ a mapping of $C$ into $X$. 
 Suppose that $\F(T)$ is nonempty and $(X,\sigma)$ satisfies the
 conditions (S) and (B).
 Then the following are equivalent: 
 \begin{enumerate}
  \item $T$ is strongly quasinonexpansive; 
  \item $T$ is quasinonexpansive and $\sigma(Tx_n, x_n) \to 0$
	whenever $\{x_n\}$ is a sequence in $C \cap \bar{B}(z,M)$ 
	and $\sigma(z,x_n) - \sigma(z,Tx_n) \to 0$ for some $z \in \F(T)$
	and $M>0$. 
 \end{enumerate}
\end{theorem}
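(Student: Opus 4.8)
The plan is to observe that, once $z \in \F(T)$ and $M > 0$ are fixed, both conditions become statements purely about the two functions $f(x) = \sigma(Tx,x)$ and $g(x) = \sigma(z,x) - \sigma(z,Tx)$ on the set $D = C \cap \bar{B}(z,M)$, and then to invoke the abstract equivalence of Lemma~\ref{lm:equiv}. Indeed, with this notation the defining implication~\eqref{e:def-QNSinX} of strong quasinonexpansiveness (for the given $z$, $M$) reads exactly as condition~(1) of Lemma~\ref{lm:equiv}, while the sequential requirement in~(2) reads exactly as condition~(2) of Lemma~\ref{lm:equiv}. The only prerequisite for applying Lemma~\ref{lm:equiv} is that $f$ and $g$ be bounded functions of $D$ into $\R_+$, and this is supplied by Lemma~\ref{lm:f,g:bdd} as soon as $T$ is known to be quasinonexpansive and $(X,\sigma)$ satisfies~(B).

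To prove that (1) implies (2), I would first apply Lemma~\ref{lm:strong->stric}, which (using condition~(S)) guarantees that a strongly quasinonexpansive $T$ is quasinonexpansive; this already yields the first half of~(2). Next, fixing an arbitrary $z \in \F(T)$ and $M > 0$, Lemma~\ref{lm:f,g:bdd} (using~(B)) makes $f$ and $g$ bounded functions of $D$ into $\R_+$. By the definition of strong quasinonexpansiveness, condition~(1) of Lemma~\ref{lm:equiv} holds, so condition~(2) of that lemma holds as well; unwinding the notation, this says precisely that $\sigma(Tx_n,x_n) \to 0$ whenever $\{x_n\}$ is a sequence in $D$ with $\sigma(z,x_n) - \sigma(z,Tx_n) \to 0$. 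Since $z$ and $M$ were arbitrary, the sequential condition in~(2) follows.

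For the converse, (2) implies (1), I would again fix $z \in \F(T)$, $M > 0$, and $\epsilon > 0$, and use the quasinonexpansiveness of $T$ together with Lemma~\ref{lm:f,g:bdd} to obtain bounded $f, g$ on $D$. The sequential hypothesis in~(2) is exactly condition~(2) of Lemma~\ref{lm:equiv}, so condition~(1) of that lemma holds: there is $\delta > 0$ with $x \in D$, $g(x) < \delta \Rightarrow f(x) < \epsilon$. Translating back, this is the implication~\eqref{e:def-QNSinX}, and since $z$, $M$, $\epsilon$ were arbitrary, $T$ is strongly quasinonexpansive.

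The routine part is the bookkeeping; the one point that needs care is the handling of the quantifiers over $z$ and $M$. The sequential clause in~(2) must be read as holding for every $z \in \F(T)$ and every $M > 0$ (the ``for some'' merely locates which ball a given sequence inhabits), so that Lemma~\ref{lm:equiv}, which fixes a single $D$, can be applied on each ball $C \cap \bar{B}(z,M)$ separately and the conclusions then combined. I would also note that in the direction (1) $\Rightarrow$ (2) it is essential to establish quasinonexpansiveness first, since otherwise Lemma~\ref{lm:f,g:bdd} does not apply and $f, g$ need not even be $\R_+$-valued.
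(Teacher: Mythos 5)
Your proposal is correct and follows essentially the same route as the paper's own proof: reduce each direction, for fixed $z\in\F(T)$ and $M>0$, to the equivalence of conditions (1) and (2) in Lemma~\ref{lm:equiv} applied to $f(x)=\sigma(Tx,x)$ and $g(x)=\sigma(z,x)-\sigma(z,Tx)$ on $D=C\cap\bar{B}(z,M)$, using Lemma~\ref{lm:strong->stric} for quasinonexpansiveness and Lemma~\ref{lm:f,g:bdd} for boundedness. Your remarks on the quantifiers over $z$ and $M$ and on establishing quasinonexpansiveness before invoking Lemma~\ref{lm:f,g:bdd} are accurate and match the paper's implicit reasoning.
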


\begin{proof} 
 We first show that (1) implies (2). 
 Suppose that $T$ is strongly quasinonexpansive. Then
 Lemma~\ref{lm:strong->stric} implies that $T$ is quasinonexpansive.
 Moreover, suppose that $\{x_n\}$ is a sequence in $C \cap
 \bar{B}(z,M)$, and $\sigma(z,x_n) - \sigma(z,Tx_n) \to 0$ for some $z
 \in \F(T)$ and  $M>0$. 
 Set $D = C \cap \bar{B}(z,M)$.
 Let $f$ and $g$ be functions of $D$ into $\R_+$ defined
 by~\eqref{eq:def_f,g} for $x \in D$.
 Lemma~\ref{lm:f,g:bdd} shows that $f$ and $g$ are bounded functions of
 $D$ into $\R_+$.
 By the implication (1) $\Rightarrow$ (2) of Lemma~\ref{lm:equiv},
 we conclude that $\sigma (Tx_n,x_n) = f(x_n) \to 0$ and hence (2)
 holds. 

 We next show that (2) implies (1). Suppose that (2) holds. 
 Let $z \in \F(T)$, $M>0$, and $\epsilon >0$ be given. 
 Set $D = C \cap \bar{B}(z,M)$. 
 Let $f$ and $g$ be functions defined by~\eqref{eq:def_f,g} for
 $x \in D$. 
 Lemma~\ref{lm:f,g:bdd} shows that $f$ and $g$ are bounded functions of
 $D$ into $\R_+$.
 By the implication (2) $\Rightarrow$ (1) of Lemma~\ref{lm:equiv},
 we see that (1) holds.
\end{proof}

As a direct consequence of Theorem~\ref{lm:SQNinX-equiv},
we obtain the following: 

\begin{corollary}[{\cite{pNACA2015}*{Lemma 3.3}}]
 Let $X$ be a metric space with metric $d$, 
 $C$ a nonempty subset of $X$, and $T$ a mapping of $C$ into $X$. 
 Then the following are equivalent: 
 \begin{enumerate}
  \item $T$ is strongly quasinonexpansive; 
  \item $T$ is quasinonexpansive and $d(Tx_n, x_n) \to 0$
	whenever $\{x_n\}$ is a bounded sequence in $C$ 
	and $d(z,x_n) - d(z,Tx_n) \to 0$ for some $z \in \F(T)$. 
 \end{enumerate}
\end{corollary}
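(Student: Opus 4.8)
The plan is to obtain the corollary as a direct specialization of Theorem~\ref{lm:SQNinX-equiv}, taking the function $\sigma$ to be the metric $d$. The first step is to verify the hypotheses of that theorem in this setting. By the Remark, a metric space $(X,d)$ automatically satisfies the conditions~(S) and~(B) (and also~(T)), so with $\sigma = d$ the theorem applies verbatim. Its condition~(1) — that $T$ be strongly quasinonexpansive with respect to $\sigma = d$ — is literally condition~(1) of the corollary, and in both statements $T$ is required to be quasinonexpansive. Hence the entire content of the proof reduces to matching the ``whenever'' clauses appearing in the two versions of condition~(2).

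The key observation is that, for a metric $d$ and a fixed point $z$, the closed ball $\bar{B}(z,M) = \{x \in X : d(z,x) \le M\}$ captures boundedness exactly: a sequence $\{x_n\}$ in $C$ is bounded if and only if $\{x_n\} \subseteq C \cap \bar{B}(z,M)$ for some $M > 0$. Concretely, given $\{x_n\}$ in $C$ and $z \in \F(T)$ with $d(z,x_n) - d(z,Tx_n) \to 0$, if $\{x_n\}$ is bounded then $\sup_n d(z,x_n) < \infty$, and I would set $M = \sup_n d(z,x_n) + 1 > 0$, so that $\{x_n\} \subseteq C \cap \bar{B}(z,M)$; conversely, membership in $\bar{B}(z,M)$ gives $d(z,x_n) \le M$ for all $n$, which is precisely boundedness. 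Under this correspondence the hypotheses of the two clauses coincide (the same $z$ and the same convergence $d(z,x_n) - d(z,Tx_n) \to 0$ are used), and so do their conclusions $\sigma(Tx_n,x_n) = d(Tx_n,x_n) \to 0$. Therefore condition~(2) of Theorem~\ref{lm:SQNinX-equiv} is equivalent to condition~(2) of the corollary, and the theorem's equivalence $(1)\Leftrightarrow(2)$ yields the result.

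I do not expect a genuine obstacle here, since the statement is advertised as a direct consequence; the only delicate point is the quantifier bookkeeping just described, namely ensuring that $M$ can be chosen strictly positive and that a single fixed point $z$ simultaneously witnesses boundedness (via the ball) and the convergence hypothesis. The strict positivity is arranged by the ``$+1$'' above, and the compatibility of $z$ is immediate because the center of the ball in the theorem may be taken to be exactly the fixed point supplied by the corollary's hypothesis. With these routine identifications in place, no further estimates are needed.
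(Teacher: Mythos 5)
Your proposal is correct and matches the paper exactly: the paper offers no written proof, simply declaring the corollary a direct consequence of Theorem~\ref{lm:SQNinX-equiv} applied with $\sigma = d$, relying on the Remark that a metric satisfies conditions~(S) and~(B). Your identification of ``bounded sequence in $C$'' with ``sequence in $C \cap \bar{B}(z,M)$ for some $M>0$'' is precisely the routine bookkeeping the authors leave implicit.
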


We provide another characterization of strongly quasinonexpansive
mappings as follows: 

\begin{theorem}\label{th:SQN-iff-gamma}
 Let $C$ be a nonempty subset of $X$ and $T$ a mapping of $C$ into $X$. 
 Suppose that $\F(T)$ is nonempty and $(X,\sigma)$ satisfies the
 conditions (S) and (B).
 Then the following are equivalent: 
 \begin{enumerate}
  \item $T$ is strongly quasinonexpansive; 
  \item for any $z \in \F(T)$ and $M>0$ there exists a nondecreasing
	bounded function $\gamma$ of $[0,\alpha]$ into $\R_+$ such that
	$\gamma(t)>0$ for all $t \in (0, \alpha]$ and
	\[
	\gamma \bigl( \sigma(Tx,x) \bigr) \leq \sigma(z,x) - \sigma(z,Tx)	 
	\]
	for all $x \in D$, where $D = C \cap \bar{B}(z,M)$ and $\alpha =
	\sup \{ \sigma (Tx,x) : x \in D\}$.
 \end{enumerate}
\end{theorem}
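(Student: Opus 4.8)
The plan is to recognize that, for a fixed base point and radius, the whole statement is essentially a restatement of Lemma~\ref{lm:equiv}. Fix $z \in \F(T)$ and $M>0$, put $D = C \cap \bar{B}(z,M)$, and let $f$ and $g$ be as in~\eqref{eq:def_f,g}, so that $\alpha = \sup\{\sigma(Tx,x): x \in D\} = \sup\{f(x): x \in D\}$ is exactly the quantity $\alpha$ of Lemma~\ref{lm:equiv}. Under this dictionary, the defining condition~\eqref{e:def-QNSinX} of strong quasinonexpansiveness for this particular $z$ and $M$ is verbatim condition~(1) of Lemma~\ref{lm:equiv}, while condition~(2) of the present theorem for this $z$ and $M$ is verbatim condition~(3) of Lemma~\ref{lm:equiv}. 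Hence, once Lemma~\ref{lm:equiv} is applicable --- which requires $f$ and $g$ to be bounded functions of $D$ into $\R_+$, supplied by Lemma~\ref{lm:f,g:bdd} --- both implications follow by quoting the matching implication of that lemma and then letting $z$ and $M$ range over $\F(T)$ and $(0,\infty)$.

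For the implication (1) $\Rightarrow$ (2), I would first invoke Lemma~\ref{lm:strong->stric}, which uses condition~(S), to conclude that $T$ is quasinonexpansive. Fixing $z$ and $M$ and forming $f$, $g$, $D$ as above, Lemma~\ref{lm:f,g:bdd} (which uses condition~(B)) then guarantees that $f$ and $g$ are bounded functions of $D$ into $\R_+$, so that Lemma~\ref{lm:equiv} applies. Since~\eqref{e:def-QNSinX} is condition~(1) of that lemma, its implication (1) $\Rightarrow$ (3) produces the desired nondecreasing bounded $\gamma$ on $[0,\alpha]$ with $\gamma(t)>0$ on $(0,\alpha]$ and $\gamma\bigl(\sigma(Tx,x)\bigr) \leq \sigma(z,x)-\sigma(z,Tx)$ on $D$. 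As $z$ and $M$ were arbitrary, (2) holds.

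For the converse (2) $\Rightarrow$ (1), the one step that is not a mere translation is verifying that $T$ is quasinonexpansive, which is a prerequisite for Lemma~\ref{lm:f,g:bdd}. I would read this directly off (2): given $x \in C$ and $z \in \F(T)$, pick $M>0$ with $\sigma(z,x)\leq M$, so $x \in D$; since the $\gamma$ provided by (2) takes values in $\R_+$, we get $0 \leq \gamma\bigl(\sigma(Tx,x)\bigr) \leq \sigma(z,x)-\sigma(z,Tx)$, whence $\sigma(z,Tx)\leq\sigma(z,x)$, and $z$, $x$ being arbitrary this is quasinonexpansiveness. Now fix $z$ and $M$, form $f$, $g$, $D$, and apply Lemma~\ref{lm:f,g:bdd} (using (B)) to see that $f$ and $g$ are bounded into $\R_+$. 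Condition~(2) of the theorem for this $z$ and $M$ is condition~(3) of Lemma~\ref{lm:equiv}, so its implication (3) $\Rightarrow$ (1) delivers, for each $\epsilon>0$, the $\delta>0$ required in~\eqref{e:def-QNSinX}. Letting $z$, $M$, and $\epsilon$ be arbitrary yields strong quasinonexpansiveness.

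The only genuine obstacle is this preliminary quasinonexpansiveness in the converse direction; once it is in hand, both implications are pure translations into Lemma~\ref{lm:equiv} via Lemma~\ref{lm:f,g:bdd}. In particular, the degenerate case $\alpha=0$ needs no separate treatment, as it is already absorbed into the statement and proof of Lemma~\ref{lm:equiv}.
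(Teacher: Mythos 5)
Your proposal is correct and follows essentially the same route as the paper: reduce to Lemma~\ref{lm:equiv} via the functions $f$, $g$ of~\eqref{eq:def_f,g} and Lemma~\ref{lm:f,g:bdd}, with Lemma~\ref{lm:strong->stric} supplying quasinonexpansiveness in the forward direction. Your explicit derivation of quasinonexpansiveness from (2) in the converse direction fills in a step the paper leaves implicit ("we see that $T$ is quasinonexpansive"), and does so correctly.
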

\begin{proof} 
 We first show that (1) implies (2). 
 Suppose that $T$ is strongly quasinonexpansive. As in the proof of
 Theorem~\ref{lm:SQNinX-equiv}, we see that $T$ is quasinonexpansive. 
 Let $z \in \F(T)$ and $M > 0$ be given. Set $D = C \cap \bar{B}(z, M)$.
 Let $f$ and $g$ be functions defined by \eqref{eq:def_f,g} for $x \in
 D$.
 Then Lemma~\ref{lm:f,g:bdd} shows that 
 $f$ and $g$ are bounded functions of $D$ into $\R_+$. 
 Using Lemma~\ref{lm:equiv}, we know that (2) holds.

 We next show that (2) implies (1). Suppose that (2) holds. 
 Then we see that $T$ is quasinonexpansive.
 Let $z \in \F(T)$ and $M > 0$ be given. 
 Let $f$ and $g$ be functions defined by \eqref{eq:def_f,g} for $x \in
 D = C \cap \bar{B}(z, M)$.
 Thus Lemmas~\ref{lm:f,g:bdd} and~\ref{lm:equiv},
 we conclude that (1) holds.
\end{proof}

As a direct consequence of Theorem~\ref{th:SQN-iff-gamma},
we obtain the following: 

\begin{corollary}\label{c:SQN-equiv-metric}
 Let $X$ be a metric space with metric $d$, 
 $C$ a nonempty subset of $X$, and $T$ a mapping of $C$ into $X$. 
 Suppose that $\F(T)$ is nonempty. 
 Then the following are equivalent:
 \begin{enumerate}
  \item $T$ is strongly quasinonexpansive; 
  \item for any $z \in \F(T)$ and $M>0$ there exists a nondecreasing
	bounded function $\gamma$ of $[0,\alpha]$ into $\R_+$ such that
	$\gamma(t)>0$ for all $t \in (0,\alpha]$ and
	\[
	\gamma\bigl( d(Tx,x) \bigr) \leq d(z,x) - d(z,Tx)	 
	\]
	for all $x \in C$ with $d(z,x) \leq M$,
	where $\alpha =	\sup \{ d(Tx,x) : x \in C,\, d(z,x) \leq M \}$.
 \end{enumerate} 
\end{corollary}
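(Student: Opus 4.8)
The plan is to derive this corollary by specializing Theorem~\ref{th:SQN-iff-gamma} to the case $\sigma = d$. First I would observe that when $\sigma$ is a metric, the ball $\bar{B}(z,M) = \{x \in X : d(z,x) \le M\}$ is the usual closed ball, so the set $D = C \cap \bar{B}(z,M)$ appearing in the theorem is precisely $\{x \in C : d(z,x) \le M\}$, and the quantity $\alpha = \sup\{\sigma(Tx,x) : x \in D\}$ is precisely $\alpha = \sup\{d(Tx,x) : x \in C,\, d(z,x) \le M\}$. Consequently, condition~(2) of the present statement is literally condition~(2) of Theorem~\ref{th:SQN-iff-gamma} with $\sigma$ replaced by $d$. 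Likewise, the abstract definition of a strongly quasinonexpansive mapping with respect to $\sigma = d$ reduces to the metric definition recalled in Section~2, so condition~(1) here coincides with condition~(1) of the theorem.

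To invoke the theorem I must check its two hypotheses, namely that $(X,d)$ satisfies conditions~(S) and~(B); this is exactly the content of the Remark following the definitions of these conditions. For completeness one verifies (S) directly from the metric axioms, since $d(x,y) > 0$ holds precisely when $x \ne y$, and (B) from the triangle inequality, which gives $d(x,y) \le d(x,z) + d(z,y) \le 2M$ for all $x,y \in \bar{B}(z,M)$, so that $\sup\{d(x,y) : x,y \in \bar{B}(z,M)\} \le 2M < \infty$.

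With these identifications in hand, the two conditions are reformulations of conditions~(1) and~(2) of Theorem~\ref{th:SQN-iff-gamma}, and the equivalence follows at once by applying that theorem with $\sigma = d$. There is no genuine obstacle here: the only point requiring attention is the bookkeeping that the abstract ball $\bar{B}(z,M)$, the domain $D$, and the supremum $\alpha$ all collapse to their metric counterparts, together with the elementary verification that a metric satisfies~(S) and~(B).
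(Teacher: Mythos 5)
Your proposal is correct and matches the paper's approach: the paper presents this corollary as a direct consequence of Theorem~\ref{th:SQN-iff-gamma} with $\sigma = d$, relying on the Remark that a metric satisfies conditions~(S) and~(B). Your explicit verification of those conditions and the bookkeeping identifying $\bar{B}(z,M)$, $D$, and $\alpha$ with their metric counterparts is exactly the intended (and omitted) argument.
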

\begin{remark}
 Corollary~\ref{c:SQN-equiv-metric} is almost the same as 
 \cite{pNACA2015}*{Theorem 3.7}, which is a precise characterization
 of strongly quasinonexpansive mappings in a metric space. 
\end{remark}

We know that the class of strongly quasinonexpansive mappings in a
metric space is closed under composition~\cite{pNACA2015}*{Theorem 3.6}; 
see also~\cite{MR0470761}*{Proposition~1.1}. 
The class of strongly quasinonexpansive mappings with respect to
$\sigma$ has a similar property as follows: 

\begin{theorem}\label{l:FS-FT=FST}
 Let $C$ and $D$ be nonempty subsets of $X$, 
 and $S\colon C\to X$ and $T\colon D\to X$ quasinonexpansive mappings
 such that $T(D)\subset C$ and $\F(S) \cap \F(T)$ is nonempty. 
 Then the following hold: 
 \begin{enumerate}
  \item If $S$ or $T$ is strictly quasinonexpansive, then
	$\F(S) \cap \F(T) = \F(S T)$ and $ST$ is quasinonexpansive; 
  \item if both $S$ and $T$ are strongly quasinonexpansive
	and $(X,\sigma)$ satisfies the conditions~(S) and~(T), 
	then $ST$ is also strongly quasinonexpansive. 
 \end{enumerate}
\end{theorem}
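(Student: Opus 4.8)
The plan is to handle the two parts separately, with part~(1) feeding into part~(2).

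For part~(1), I would first record the trivial inclusion: if $w \in \F(S) \cap \F(T)$, then $STw = S(Tw) = Sw = w$, so $w \in \F(ST)$; in particular $\F(ST)$ is nonempty. For the reverse inclusion, fix any $w \in \F(S) \cap \F(T)$ and take $x \in \F(ST)$. Quasinonexpansiveness of $S$ and $T$ with the common fixed point $w$ (using $Tx \in T(D) \subset C$) gives the chain
\[
\sigma(w, x) = \sigma(w, STx) \leq \sigma(w, Tx) \leq \sigma(w, x),
\]
forcing $\sigma(w, Tx) = \sigma(w, x)$ and $\sigma(w, STx) = \sigma(w, Tx)$. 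If $T$ is strictly quasinonexpansive, the first equality rules out $x \in D \setminus \F(T)$, so $Tx = x$ and hence $Sx = S(Tx) = STx = x$; if instead $S$ is strictly quasinonexpansive, the second equality rules out $Tx \in C \setminus \F(S)$, so $STx = Tx$, whence $Tx = x$ and $Sx = x$. Either way $x \in \F(S) \cap \F(T)$, establishing $\F(ST) = \F(S) \cap \F(T)$. Finally, for any $w$ in this (nonempty) set and any $x \in D$, composing the two quasinonexpansive inequalities yields $\sigma(w, STx) \leq \sigma(w, Tx) \leq \sigma(w, x)$, so $ST$ is quasinonexpansive.

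For part~(2), condition~(S) and Lemma~\ref{lm:strong->stric} make $S$ and $T$ strictly quasinonexpansive, so part~(1) applies: $\F(ST) = \F(S) \cap \F(T)$ is nonempty and $ST$ is quasinonexpansive. It remains to verify the $\delta$--$\epsilon$ condition for $ST$. I would argue through the sequential reformulation of strong quasinonexpansiveness, namely the elementary equivalence~(1)~$\Leftrightarrow$~(2) of Lemma~\ref{lm:equiv}, whose two implications follow immediately from the $\delta$--$\epsilon$ statement and use no boundedness (note that condition~(B) is \emph{not} among the present hypotheses, so Theorem~\ref{lm:SQNinX-equiv} cannot be invoked wholesale). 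Concretely, fix $w \in \F(ST)$ and $M > 0$, take a sequence $\{x_n\}$ in $D \cap \bar{B}(w, M)$ with $\sigma(w, x_n) - \sigma(w, STx_n) \to 0$, and aim to show $\sigma(STx_n, x_n) \to 0$.

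The core of the argument is to split this hypothesis. Since $\sigma(w, STx_n) \leq \sigma(w, Tx_n) \leq \sigma(w, x_n)$, the two nonnegative quantities $\sigma(w, x_n) - \sigma(w, Tx_n)$ and $\sigma(w, Tx_n) - \sigma(w, STx_n)$ sum to a null sequence, so each tends to $0$. Feeding $\{x_n\}$ (in $D \cap \bar{B}(w,M)$) into the strong quasinonexpansiveness of $T$ gives $\sigma(Tx_n, x_n) \to 0$; feeding $\{Tx_n\}$, which lies in $C \cap \bar{B}(w, M)$ because $\sigma(w, Tx_n) \leq M$, into that of $S$ gives $\sigma(STx_n, Tx_n) \to 0$. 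I expect the main obstacle to be the final step: passing from these two null sequences to $\sigma(STx_n, x_n) \to 0$. Since $\sigma$ is merely a function into $\R_+$ with no triangle inequality, this is not automatic, and it is precisely here that condition~(T) enters. The key check is that the three points $STx_n$, $Tx_n$, $x_n$ all lie in the single ball $\bar{B}(w, M)$, which holds since $\sigma(w, STx_n) \leq \sigma(w, Tx_n) \leq \sigma(w, x_n) \leq M$; the sequential form of~(T) then delivers $\sigma(STx_n, x_n) \to 0$, and the sequential characterization yields that $ST$ is strongly quasinonexpansive.
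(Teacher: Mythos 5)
Your proof is correct and follows essentially the same route as the paper: the same chain of (in)equalities for part~(1), and for part~(2) the same decomposition of $\sigma(u,y)-\sigma(u,STy)$ into the two nonnegative differences handled by the strong quasinonexpansiveness of $T$ and of $S$ respectively, combined via condition~(T) on the three points $STy$, $Ty$, $y$ in $\bar{B}(u,M)$. The only difference is presentational: you phrase part~(2) sequentially (correctly observing that this reformulation needs no condition~(B)), whereas the paper works directly with $\epsilon$, $\eta$, and $\delta$.
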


\begin{proof}
 We first prove (1). 
 It is clear that $\F(S)\cap \F(T)\subset \F(ST)$ and hence $\F(ST)$ is
 nonempty. 
 We show that $\F(S)\cap \F(T)\supset \F(ST)$. 
 Let $z\in \F(ST)$ and $w\in \F(S)\cap \F(T)$ be given. 
 Since both $S$ and $T$ are quasinonexpansive, we have
 \[
 \sigma(w,z) = \sigma(w,STz) \leq \sigma(w,Tz) \leq \sigma(w,z). 
 \]
 This shows that 
 \begin{equation}
  \sigma(w,z)= \sigma(w,Tz) = \sigma(w,STz).  \label{eqn:1-1}
 \end{equation}
 Now suppose that $T$ is strictly quasinonexpansive. 
 Then $Tz = z$ from~\eqref{eqn:1-1}.  
 Hence $z=STz=Sz$. 
 Therefore we conclude that $z\in \F(S)\cap \F(T)$. 
 On the other hand, suppose that $S$ is strictly quasinonexpansive. 
 Then it follows from~\eqref{eqn:1-1}
 that $STz = Tz$. Thus $z=Tz$ and hence $z\in \F(S)\cap \F(T)$. 
 Consequently we know that $\F(S)\cap \F(T)=\F(ST)$. 
 This implies that $\sigma(z,STx)\leq \sigma(z,Tx)\leq \sigma(z,x)$ for all $x\in D$
 because $S$ and $T$ are quasinonexpansive. 
 Thus $ST$ is also quasinonexpansive. 

 We next show (2). 
 Lemma~\ref{lm:strong->stric} implies that $S$ and $T$ are strictly
 quasinonexpansive. Thus it follows from~(1) that
 $\F(ST)=\F(S)\cap \F(T)\ne \emptyset$. 
 Let $u \in \F(ST)$, $M>0$, and $\epsilon > 0$ be given. 
 By the condition~(T), 
 there exists $\eta >0$ such that \eqref{eq:Condition-T} holds. 
 Since $u \in \F(S)\cap \F(T)$ and both $S$ and $T$ are strongly
 quasinonexpansive, there exists $\delta > 0$ such that 
 \[
 x \in C \cap \bar{B}(u, M), \, \sigma(u,x) - \sigma(u,Sx) < \delta
 \Rightarrow \sigma(Sx,x) < \eta
 \]
 and
 \[
 x \in D \cap \bar{B}(u,M),\, \sigma(u,x) - \sigma(u,Tx) < \delta
 \Rightarrow \sigma(Tx,x) < \eta. 
 \]
 Suppose that $y \in D \cap \bar{B}(u, M)$ and 
 $\sigma(u,y) - \sigma(u, STy) < \delta$. 
 Since $S$ and $T$ are quasinonexpansive 
 and $u \in \F(S) \cap \F(T)$, we have
 \begin{multline*}
  \sigma(u,STy) \leq \sigma(u,Ty) \leq \sigma(u,y) \leq M,  \\
  \sigma(u,y) - \sigma(u,Ty) < \delta, \text{ and }
  \sigma(u,Ty) - \sigma(u,STy) < \delta. 
 \end{multline*}
 Thus $y, Ty, STy \in \bar{B}(u, M)$, $\sigma(STy,Ty) < \eta$, and
 $\sigma(Ty,y) < \eta$. 
 Therefore it follows from~\eqref{eq:Condition-T} that
 $(STy,y) < \epsilon$ and hence $ST$ is strongly quasinonexpansive.
\end{proof}

As a direct consequence of Theorem~\ref{l:FS-FT=FST}, we obtain the
following: 

\begin{corollary}[{\cite{pNACA2015}*{Lemma 3.5 and Theorem 3.6}}]
 Let $C$ and $D$ be nonempty subsets of a metric space $X$, 
 and $S\colon C\to X$ and $T\colon D\to X$ quasinonexpansive mappings
 such that $T(D)\subset C$ and $F(S) \cap F(T)$ is nonempty. 
 Then the following hold:
 \begin{enumerate}
  \item If $S$ or $T$ is strictly quasinonexpansive, then 
	$F(S) \cap F(T) = F(S T)$ and $ST$ is quasinonexpansive; 
  \item if $S$ and $T$ are strongly quasinonexpansive, 
	then $ST$ is also strongly quasinonexpansive. 
 \end{enumerate}
\end{corollary}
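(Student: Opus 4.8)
The plan is to read off this corollary as the specialization of Theorem~\ref{l:FS-FT=FST} to the case in which $\sigma$ is the metric $d$. The one substantive observation needed is that, for $\sigma = d$, the abstract notions of Section~4 coincide verbatim with the metric-space notions of Section~2: since $\bar{B}(z,M) = \{x \in X : d(z,x) \le M\}$, a mapping is quasinonexpansive, strictly quasinonexpansive, or strongly quasinonexpansive with respect to $d$ exactly when it is so in the metric sense, the defining inequalities being identical in the two formulations. I would state this identification once at the outset, so that the quasinonexpansiveness hypotheses imposed on $S$ and $T$ transfer directly into the hypotheses of the theorem.

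Next I would invoke the remark recorded just after the definitions of conditions (S), (B), and (T), which notes that every metric makes $(X,d)$ satisfy all three of these conditions. This is what removes the structural hypotheses of Theorem~\ref{l:FS-FT=FST} from view. For part~(1), Theorem~\ref{l:FS-FT=FST}(1) imposes no condition on $(X,\sigma)$ at all, so the equality $\F(S)\cap\F(T) = \F(ST)$ and the quasinonexpansiveness of $ST$ follow immediately once $S$ or $T$ is strictly quasinonexpansive. For part~(2), Theorem~\ref{l:FS-FT=FST}(2) additionally asks that $(X,\sigma)$ satisfy (S) and (T); the remark supplies both for the metric $d$, so the strong quasinonexpansiveness of $ST$ is again a direct application.

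I do not expect any real obstacle. The only place where care is warranted is the definitional matching in the first paragraph---confirming that the closed neighbourhoods $\bar{B}(z,M)$ reduce to metric balls and that the boundedness conditions align with membership in some $C \cap \bar{B}(z,M)$---but this is routine. Once the identification is in place, both assertions are immediate instances of the theorem, which is precisely why the result is presented as a corollary.
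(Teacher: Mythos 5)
Your proposal is correct and matches the paper exactly: the paper presents this corollary as a direct consequence of Theorem~\ref{l:FS-FT=FST}, relying on the remark that a metric space satisfies conditions~(S), (B), and~(T) and on the verbatim agreement of the metric-space definitions with the abstract ones when $\sigma = d$. No further commentary is needed.
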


\section{Strongly quasinonexpansive mappings in a Banach space}
In this section, 
we give some characterizations of 
mappings of type~(sr) in the sense of
\cites{MR2884574, 
 MR2529497, 
MR3258665} 
and show a fundamental property of such mappings
by using the results in the previous section. 

Throughout this section, let $E$ be a smooth Banach space,
$\phi$ a function of $E\times E$ into $\R_+$ defined by 
\[
 \phi(x,y) = \norm{x}^2 - 2\ip{x}{Jy} + \norm{y}^2
\]
for $x,y\in E$, and $\bar B(z, M)$ a subset of $E$ defined by
\[
 \bar B (z, M) = \{x \in E: \phi(z,x) \leq M\}
\]
for $z \in E$ and $M>0$; see~\cite{MR1386667} for the function $\phi$. 

By definition, it is clear that
\begin{align}
 x=y  &\Rightarrow \phi(x,y) =0 \notag \\
 \intertext{and}
 ( \norm{x}-\norm{y})^2  &\leq \phi(x,y)
 \leq \left( \norm{x} + \norm{y}\right)^2 \label{eqn:phi-bounded}
\end{align}
for all $x,y\in E$. It is also clear that
if $E$ is strictly convex, then 
\begin{equation}\label{e:phi(x,y)=0:x=y}
\phi(x,y) = 0 \Rightarrow x=y  
\end{equation}
for all $x,y\in E$. Furthermore, we know the following: 

\begin{lemma}[\cite{MR1972223}]
 \label{lemma:kamimura-takahashi}
 Let $E$ be a smooth and uniformly convex Banach space
 and both $\{x_n\}$ and $\{y_n\}$ bounded sequences in $E$. 
 If $\phi(x_n,y_n)\to 0$, then $\norm{x_n - y_n}\to 0$. 
\end{lemma}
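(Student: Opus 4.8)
The plan is to argue by contradiction, bringing uniform convexity into play through a midpoint estimate in which $Jy_n$ serves as a separating functional. Since $\{x_n\}$ and $\{y_n\}$ are bounded, first fix $r>0$ with $\norm{x_n},\norm{y_n}\le r$ for all $n$. The inequality~\eqref{eqn:phi-bounded} gives $(\norm{x_n}-\norm{y_n})^2\le \phi(x_n,y_n)\to 0$, so $\norm{x_n}-\norm{y_n}\to 0$. Suppose, contrary to the claim, that $\norm{x_n-y_n}\not\to 0$. Then there are $\epsilon>0$ and a subsequence, which I still denote by $\{x_n\}$ and $\{y_n\}$, with $\norm{x_n-y_n}\ge\epsilon$ for all $n$; passing to a further subsequence I may assume $\norm{x_n}\to a$, and then $\norm{y_n}\to a$ as well. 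If $a=0$ then $\norm{x_n-y_n}\le\norm{x_n}+\norm{y_n}\to 0$, a contradiction, so $a>0$.

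Next I would pin down the norm of the midpoint $m_n=(x_n+y_n)/2$. From $\phi(x_n,y_n)=\norm{x_n}^2-2\ip{x_n}{Jy_n}+\norm{y_n}^2\to 0$ together with $\norm{x_n}^2+\norm{y_n}^2\to 2a^2$, I obtain $\ip{x_n}{Jy_n}\to a^2$, hence $\ip{x_n+y_n}{Jy_n}=\ip{x_n}{Jy_n}+\norm{y_n}^2\to 2a^2$. Since $\norm{Jy_n}=\norm{y_n}\to a$, the estimate
\[
 \norm{x_n+y_n}\ge \frac{\ip{x_n+y_n}{Jy_n}}{\norm{Jy_n}}
\]
yields $\liminf_n\norm{x_n+y_n}\ge 2a$, while the triangle inequality gives $\norm{x_n+y_n}\le\norm{x_n}+\norm{y_n}\to 2a$. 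Therefore $\norm{m_n}\to a$. This step is the crux: the whole argument hinges on recognizing that $Jy_n$ is the functional that turns the asymptotic identity $\ip{x_n}{Jy_n}\to a^2$ into a lower bound for the midpoint norm.

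Finally I would normalize and invoke uniform convexity. Put $u_n=x_n/\norm{x_n}$ and $v_n=y_n/\norm{y_n}$, which are well defined for large $n$ since $a>0$. Because $\norm{x_n},\norm{y_n}\to a$, a short computation gives $\norm{u_n-x_n/a}=\lvert 1-\norm{x_n}/a\rvert\to 0$ and likewise $\norm{v_n-y_n/a}\to 0$, so $(u_n+v_n)/2-m_n/a\to 0$ in norm and $\norm{(u_n+v_n)/2}\to\norm{m_n}/a\to 1$. The same bookkeeping shows $u_n-v_n-(x_n-y_n)/a\to 0$, whence $\liminf_n\norm{u_n-v_n}\ge\epsilon/a$, and so $\norm{u_n-v_n}\ge\epsilon':=\epsilon/(2a)>0$ for all large $n$. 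By uniform convexity there is $\delta>0$ (the modulus of convexity evaluated at $\epsilon'$) with $\norm{(u+v)/2}\le 1-\delta$ whenever $\norm{u}=\norm{v}=1$ and $\norm{u-v}\ge\epsilon'$; applying this to $u_n,v_n$ gives $\norm{(u_n+v_n)/2}\le 1-\delta$ for all large $n$, contradicting $\norm{(u_n+v_n)/2}\to 1$. This contradiction proves $\norm{x_n-y_n}\to 0$.

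I expect the delicate points to be purely the limiting bookkeeping in the last two paragraphs (passing to subsequences, and controlling the difference between $m_n/a$ and the normalized midpoint), rather than any genuine difficulty; the geometric content is entirely carried by the midpoint lower bound and the single application of uniform convexity. As an alternative one could bypass the contradiction argument by first constructing, for each bounded region, a strictly increasing continuous convex function $g$ with $g(0)=0$ and $g\bigl(\norm{x-y}\bigr)\le\phi(x,y)$, after which $\phi(x_n,y_n)\to 0$ forces $\norm{x_n-y_n}\to 0$ directly; this is the route taken in the cited reference, but it merely repackages the same use of uniform convexity.
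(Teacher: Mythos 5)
The paper offers no proof of this lemma: it is imported verbatim from Kamimura and Takahashi \cite{MR1972223}, so there is no internal argument to compare against. Your blind proof is correct and self-contained. Every step checks out: the bound $(\norm{x_n}-\norm{y_n})^2\le\phi(x_n,y_n)$ gives $\norm{x_n}-\norm{y_n}\to 0$; after passing to a subsequence with $\norm{x_n-y_n}\ge\epsilon$ and $\norm{x_n}\to a>0$, the identity $2\ip{x_n}{Jy_n}=\norm{x_n}^2+\norm{y_n}^2-\phi(x_n,y_n)\to 2a^2$ combined with $\norm{x_n+y_n}\,\norm{Jy_n}\ge\ip{x_n+y_n}{Jy_n}$ forces the midpoint norm to $a$, and the normalization bookkeeping plus the modulus of convexity yields the contradiction. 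The only cosmetic blemish is the abuse of notation $\norm{(u_n+v_n)/2}\to\norm{m_n}/a\to 1$, which should read as two statements (the difference tends to $0$, and the second quantity tends to $1$); this is not a gap. Your approach differs from the cited source in packaging rather than substance: Kamimura and Takahashi establish, for each bounded set, a continuous strictly increasing convex $g$ with $g(0)=0$ and $g\bigl(\norm{x-y}\bigr)\le\phi(x,y)$, from which the sequential statement follows immediately and which is reusable as a quantitative estimate (much in the spirit of the function $\gamma$ in Lemma~\ref{lm:equiv} of this paper); your direct contradiction argument proves exactly the sequential form needed here with less machinery, at the cost of not producing such a modulus. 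As you note yourself, both routes rest on the same single application of uniform convexity at the normalized midpoint.
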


Under the appropriate assumptions,
we know that the pair $(E,\phi)$ satisfies the
conditions~(B), (S), and~(T) as stated in the previous section: 

\begin{lemma}\label{lm:E-(T)}
 Let $E$ be a smooth Banach space. The the following hold: 
 \begin{enumerate}
  \item $(E,\phi)$ satisfies the condition~(B); 
  \item if $E$ is strictly convex, then $(E,\phi)$ satisfies the
	condition~(S);
  \item if $E$ is uniformly convex, then $(E,\phi)$ satisfies the
	condition~(T).
 \end{enumerate}
\end{lemma}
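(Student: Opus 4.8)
The plan is to verify the three conditions in turn, treating (1) and (2) as short deductions from the displayed inequalities and reserving the real work for (3).

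For (1) I would first observe that the balls $\bar B(z,M)$ are norm-bounded: if $\phi(z,x)\leq M$, then the lower estimate in \eqref{eqn:phi-bounded} gives $(\norm z-\norm x)^2\leq\phi(z,x)\leq M$, so $\norm x\leq\norm z+\sqrt M$. Consequently, for $x,y\in\bar B(z,M)$ the upper estimate in \eqref{eqn:phi-bounded} yields $\phi(x,y)\leq(\norm x+\norm y)^2\leq(2\norm z+2\sqrt M)^2<\infty$, which is exactly \eqref{eq:condition(B)}.

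For (2) the implication $\phi(x,y)>0\Rightarrow x\neq y$ is immediate from the already-noted fact that $x=y$ forces $\phi(x,y)=0$. Conversely, since $\phi$ takes values in $\R_+$, if $x\neq y$ then $\phi(x,y)\neq 0$ by the contrapositive of \eqref{e:phi(x,y)=0:x=y} (this is where strict convexity enters), and hence $\phi(x,y)>0$.

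The main obstacle is (3). I would invoke the sequential characterization of the condition~(T) recorded just after its definition, so that it suffices to show $\phi(x_n,z_n)\to 0$ whenever $\{x_n\}$, $\{y_n\}$, $\{z_n\}$ lie in a common ball $\bar B(u,M)$ with $\phi(x_n,y_n)\to 0$ and $\phi(y_n,z_n)\to 0$. Because a uniformly convex space is strictly convex and, by part~(1), $\bar B(u,M)$ is norm-bounded, all three sequences are bounded; Lemma~\ref{lemma:kamimura-takahashi} then gives $\norm{x_n-y_n}\to 0$ and $\norm{y_n-z_n}\to 0$, whence $\norm{x_n-z_n}\to 0$ by the triangle inequality. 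The delicate point is to pass from this norm convergence back to $\phi(x_n,z_n)\to 0$ \emph{without} assuming any continuity of $J$ (which is not available from uniform convexity alone). For this I would rewrite, using $\ip{z_n}{Jz_n}=\norm{z_n}^2$,
\[
\phi(x_n,z_n)=\bigl(\norm{x_n}^2-\norm{z_n}^2\bigr)-2\ip{x_n-z_n}{Jz_n}.
\]
The first term has absolute value at most $\norm{x_n-z_n}\bigl(\norm{x_n}+\norm{z_n}\bigr)$, and the second at most $\norm{x_n-z_n}\,\norm{z_n}$ (via Cauchy–Schwarz and $\norm{Jz_n}=\norm{z_n}$). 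Since $\norm{x_n-z_n}\to 0$ while the norms stay bounded, both terms vanish, giving $\phi(x_n,z_n)\to 0$ and thus the condition~(T).
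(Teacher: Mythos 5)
Your proof is correct and follows essentially the same route as the paper's: boundedness of $\bar B(z,M)$ via \eqref{eqn:phi-bounded} for (1), the implication \eqref{e:phi(x,y)=0:x=y} for (2), and for (3) the sequential form of condition~(T) combined with Lemma~\ref{lemma:kamimura-takahashi}, the triangle inequality, and the same algebraic decomposition of $\phi(x_n,z_n)$ to avoid any continuity assumption on $J$. (Your norm bound $\norm{x}\leq\norm{z}+\sqrt{M}$ in part (1) is in fact slightly more careful than the constant the paper displays, but the argument is the same.)
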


\begin{proof}
 We first show (1). 
 Let $z \in E$ and $M > 0$.
 Then it is clear from~\eqref{eqn:phi-bounded} that
 \begin{align*}
  \sup \{ \phi(x,y): x,y \in \bar B(z,M)\}
  &\leq \sup \left\{ \left(\norm{x} + \norm{y} \right)^2 :
  x,y \in \bar B(z,M) \right\}\\
  &\leq \left( 2M + \norm{z} \right)^2 < \infty. 
 \end{align*}

 (2) immediately follows from~\eqref{e:phi(x,y)=0:x=y}. 
 
 Lastly, we show (3). 
 Let $\{x_n\}$, $\{y_n\}$, and $\{z_n\}$ be sequences in $\bar B(u, M)$
 for some $u \in E$ and $M>0$.
 Suppose that $\phi(x_n,y_n)\to 0$ and $\phi(y_n,z_n) \to 0$.
 Since $\{x_n\}$, $\{y_n\}$, and $\{z_n\}$ are bounded, 
 it follows from Lemma~\ref{lemma:kamimura-takahashi} that
 $\norm{x_n - y_n} \to 0$ and $\norm{y_n - z_n}\to 0$.
 Thus $\norm{x_n - z_n}\to 0$. Therefore we conclude that 
 \begin{align*}
  \phi(x_n,z_n) &= \norm{x_n}^2 - \norm{z_n}^2 - 2\ip{x_n-z_n}{Jz_n}\\
  &\leq (\norm{x_n} - \norm{z_n})(\norm{x_n} + \norm{z_n})
  + 2\norm{x_n-z_n}\norm{z_n} \to 0
 \end{align*}
 as $n\to 0$. This completes the proof. 
\end{proof}

Let $C$ be a nonempty subset of $E$ and $T$ a mapping of $C$ into $E$. 
Recall that a mapping $T$ is said to be \emph{quasinonexpansive with
respect to $\phi$}
if $\F(T) \ne \emptyset$ and $\phi(z,Tx)\leq \phi(z,x)$ for all
$z\in \F(T)$ and $x\in C$; 
$T$ is said to be \emph{strictly quasinonexpansive with respect to $\phi$}
if $\F(T) \ne \emptyset$ and $\phi(z,Tx) < \phi(z,x)$ for all
$z\in \F(T)$ and $x\in C \backslash \F(T)$; 
$T$ is said to be \emph{strongly quasinonexpansive with respect to $\phi$}
if $\F(T) \ne \emptyset$ and for any $z \in \F(T)$, $M>0$,
and $\epsilon> 0$ there exists $\delta >0$ such that
\[
 x \in C \cap \bar B(z,M),\, \phi(z,x) - \phi(z,Tx) <\delta
 \Rightarrow \phi(Tx,x) < \epsilon. 
\]

Using Theorems~\ref{lm:SQNinX-equiv}, \ref{th:SQN-iff-gamma},
and Lemma~\ref{lm:E-(T)}, we obtain the following characterizations of
strongly quasinonexpansive mappings with respect to $\phi$. 

\begin{theorem}\label{th:equiv-in-B}
 Let $C$ be a nonempty subset of a smooth and strictly convex Banach
 space $E$ and $T$ a mapping of $C$ into $E$.
 Suppose that $\F(T)$ is nonempty. Then the following are equivalent: 
 \begin{enumerate}
  \item $T$ is strongly quasinonexpansive with respect to $\phi$; 
  \item $T$ is quasinonexpansive with respect to $\phi$ and
	$\phi(Tx_n, x_n) \to 0$
	whenever $\{x_n\}$ is a sequence in $C \cap \bar{B}(z,M)$ 
	and $\phi(z,x_n) - \phi(z,Tx_n) \to 0$ for some $z \in \F(T)$
	and $M>0$; 
  \item for any $z \in \F(T)$ and $M>0$ there exists a nondecreasing
	bounded function $\gamma$ of $[0,\alpha]$ into $\R_+$ such that
	$\gamma(t)>0$ for all $t \in (0, \alpha]$ and
	\[
	\gamma \bigl( \phi(Tx,x) \bigr) \leq \phi(z,x) - \phi(z,Tx)	 
	\]
	for all $x \in D$, where $D = C \cap \bar{B}(z,M)$ and $\alpha =
	\sup \{ \phi (Tx,x) : x \in D\}$.
 \end{enumerate}
\end{theorem}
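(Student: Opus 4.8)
The plan is to specialize the abstract characterizations of Theorems~\ref{lm:SQNinX-equiv} and~\ref{th:SQN-iff-gamma} to the concrete setting $X = E$ and $\sigma = \phi$. The notions of quasinonexpansiveness, strict quasinonexpansiveness, and strong quasinonexpansiveness with respect to $\phi$ that are recalled in this section coincide verbatim with the abstract notions with respect to $\sigma$ once we put $\sigma = \phi$, and the balls $\bar B(z,M)$ likewise agree. So the only thing that needs checking is that the pair $(E,\phi)$ satisfies the two structural hypotheses, conditions~(S) and~(B), that both abstract theorems require.

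First I would invoke Lemma~\ref{lm:E-(T)}. Part~(1) of that lemma shows that $(E,\phi)$ satisfies the condition~(B) whenever $E$ is smooth, which holds by assumption. Part~(2) shows that $(E,\phi)$ satisfies the condition~(S) whenever $E$ is strictly convex, again part of the standing hypothesis. Hence $(E,\phi)$ satisfies both~(S) and~(B). With these two facts secured, I would simply apply Theorem~\ref{lm:SQNinX-equiv} to obtain the equivalence of~(1) and~(2), and Theorem~\ref{th:SQN-iff-gamma} to obtain the equivalence of~(1) and~(3). Chaining these two equivalences shows that~(1), (2), and~(3) are mutually equivalent, which is the assertion.

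There is no genuine obstacle in this argument: all of the analytic work has already been carried out in establishing the abstract Theorems~\ref{lm:SQNinX-equiv} and~\ref{th:SQN-iff-gamma} and in verifying Lemma~\ref{lm:E-(T)}, so the proof is a matter of matching hypotheses. The one point worth emphasizing is that condition~(T)---and therefore uniform convexity of $E$---plays no role here; condition~(T) was needed only for the composition result Theorem~\ref{l:FS-FT=FST}, whereas smoothness (for~(B)) together with strict convexity (for~(S)) already suffices to run both characterizations.
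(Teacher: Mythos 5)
Your proposal is correct and matches the paper's own argument: the paper derives this theorem precisely by specializing Theorems~\ref{lm:SQNinX-equiv} and~\ref{th:SQN-iff-gamma} to $\sigma=\phi$, using parts~(1) and~(2) of Lemma~\ref{lm:E-(T)} to verify conditions~(B) and~(S). Your observation that condition~(T) and uniform convexity are not needed here is also consistent with the paper.
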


\begin{remark}
 A quasinonexpansive mapping with respect to $\phi$ is called a mapping
 of type~(r) in
 \cites{MR2884574, 
 MR2529497, 
 MR3013135, 
 MR2960628, 
 MR3258665}; 
 a mapping which satisfies the condition~(2) in
 Theorem~\ref{th:equiv-in-B} is called a mapping of type~(sr) in
 \cites{MR2884574, 
 MR2529497, 
MR3258665}. 
\end{remark}

Using Lemmas~\ref{lm:strong->stric}, \ref{lm:E-(T)}, and
Theorem~\ref{l:FS-FT=FST}, we obtain the following theorem, which is a
generalization of \cite{MR2884574}*{Lemma~3.2};
see also~\cite{MR2884574}*{Lemma 3.3}. 

\begin{theorem} 
 Let $C$ and $D$ be nonempty subsets of a smooth Banach space $E$, 
 and $S\colon C\to E$ and $T\colon D\to E$ quasinonexpansive mappings
 with respect to $\phi$ such that $T(D)\subset C$ and  $\F(S) \cap
 \F(T)$ is nonempty. Then following hold: 
 \begin{enumerate}
  \item If $S$ or $T$ is strictly quasinonexpansive with respect to
	$\phi$, then 
	$\F(S) \cap \F(T) = \F(S T)$ and $ST$ is quasinonexpansive
	with respect to $\phi$; 
  \item if $E$ is uniformly convex and 
	both $S$ and $T$ are  strongly quasinonexpansive with respect to
	$\phi$, then
	$ST$ is strongly quasinonexpansive with respect to $\phi$. 
 \end{enumerate}
\end{theorem}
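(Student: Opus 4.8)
The plan is to derive both assertions directly from the abstract composition theorem of the previous section, Theorem~\ref{l:FS-FT=FST}, by specializing to the case $X=E$ and $\sigma=\phi$. Under this identification, quasinonexpansiveness, strict quasinonexpansiveness, and strong quasinonexpansiveness with respect to $\phi$ coincide verbatim with the corresponding notions with respect to $\sigma$; the hypotheses $T(D)\subset C$ and $\F(S)\cap\F(T)\ne\emptyset$ match as well. Thus the entire statement is an instance of Theorem~\ref{l:FS-FT=FST}, provided the structural conditions required there are checked, and the whole analytic content has already been packaged into that theorem together with Lemma~\ref{lm:E-(T)}.

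For part~(1), I would simply invoke Theorem~\ref{l:FS-FT=FST}(1). The crucial observation is that part~(1) of that theorem imposes no condition whatsoever on the pair $(X,\sigma)$ beyond quasinonexpansiveness of $S$ and $T$: its proof uses only the inequalities $\sigma(w,STz)\leq\sigma(w,Tz)\leq\sigma(w,z)$ and the definition of strict quasinonexpansiveness. Consequently, smoothness of $E$ alone is enough here—it is needed only to guarantee that $J$, and hence $\phi$, is single-valued so that $(E,\phi)$ is a legitimate instance of $(X,\sigma)$. This yields at once $\F(S)\cap\F(T)=\F(ST)$ and quasinonexpansiveness of $ST$ with respect to $\phi$.

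For part~(2), the one additional task is to verify that $(E,\phi)$ satisfies the conditions~(S) and~(T) demanded by Theorem~\ref{l:FS-FT=FST}(2). Here I would first recall the fact noted in the Preliminaries that every uniformly convex Banach space is strictly convex; Lemma~\ref{lm:E-(T)}(2) then supplies the condition~(S), while Lemma~\ref{lm:E-(T)}(3), applicable precisely because $E$ is uniformly convex, supplies the condition~(T). With both conditions established and $S$, $T$ strongly quasinonexpansive with respect to $\phi$, Theorem~\ref{l:FS-FT=FST}(2) concludes that $ST$ is strongly quasinonexpansive with respect to $\phi$. (Internally, that theorem passes through Lemma~\ref{lm:strong->stric} to upgrade strong to strict quasinonexpansiveness, but this is already absorbed into its proof and need not be repeated.)

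In short, the argument is a matter of correctly matching hypotheses rather than of new estimates. The nearest thing to an obstacle is the bookkeeping point that condition~(S), which in Lemma~\ref{lm:E-(T)} requires strict convexity, is nonetheless automatically available in part~(2) because the assumed uniform convexity already forces strict convexity; recognizing this is what allows part~(2) to be stated with uniform convexity as the sole geometric hypothesis on $E$.
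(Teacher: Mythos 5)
Your proposal is correct and follows exactly the route the paper intends: the theorem is stated there as an immediate specialization of Theorem~\ref{l:FS-FT=FST} to $(X,\sigma)=(E,\phi)$, with Lemma~\ref{lm:E-(T)} (together with the fact that uniform convexity implies strict convexity) supplying the conditions~(S) and~(T) needed for part~(2), and no extra conditions needed for part~(1). The hypothesis-matching details you spell out are precisely the content the paper leaves implicit.
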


\begin{bibdiv}
\begin{biblist}

\bib{MR1386667}{incollection}{
      author={Alber, Y.~I.},
       title={Metric and generalized projection operators in {B}anach spaces:
  properties and applications},
        date={1996},
   booktitle={Theory and applications of nonlinear operators of accretive and
  monotone type},
      series={Lecture Notes in Pure and Appl. Math.},
      volume={178},
   publisher={Dekker, New York},
       pages={15\ndash 50},
      review={\MR{1386667}},
}

\bib{MR3013135}{incollection}{
      author={Aoyama, Koji},
       title={Asymptotic fixed points of sequences of quasi-nonexpansive type
  mappings},
        date={2011},
   booktitle={Banach and function spaces {III} ({ISBFS} 2009)},
   publisher={Yokohama Publ., Yokohama},
       pages={343\ndash 350},
      review={\MR{3013135}},
}

\bib{pNACA2015}{incollection}{
      author={Aoyama, Koji},
       title={Strongly quasinonexpansive mappings},
        date={2016},
   booktitle={Nonlinear analysis and convex analysis},
   publisher={Yokohama Publ., Yokohama},
       pages={19\ndash 27},
}

\bib{MR2960628}{article}{
      author={Aoyama, Koji},
      author={Kimura, Yasunori},
      author={Kohsaka, Fumiaki},
       title={Strong convergence theorems for strongly relatively nonexpansive
  sequences and applications},
        date={2012},
        ISSN={1906-9685},
     journal={J. Nonlinear Anal. Optim.},
      volume={3},
       pages={67\ndash 77},
      review={\MR{2960628}},
}

\bib{MR3258665}{article}{
      author={Aoyama, Koji},
      author={Kohsaka, Fumiaki},
       title={Strongly relatively nonexpansive sequences generated by firmly
  nonexpansive-like mappings},
        date={2014},
        ISSN={1687-1812},
     journal={Fixed Point Theory Appl.},
       pages={2014:95, 13},
         url={http://dx.doi.org/10.1186/1687-1812-2014-95},
      review={\MR{3258665}},
}

\bib{MR3213161}{article}{
      author={Aoyama, Koji},
      author={Kohsaka, Fumiaki},
       title={Viscosity approximation process for a sequence of
  quasinonexpansive mappings},
        date={2014},
        ISSN={1687-1812},
     journal={Fixed Point Theory Appl.},
       pages={2014:17, 11},
         url={http://dx.doi.org/10.1186/1687-1812-2014-17},
      review={\MR{3213161}},
}

\bib{MR2884574}{incollection}{
      author={Aoyama, Koji},
      author={Kohsaka, Fumiaki},
      author={Takahashi, Wataru},
       title={Strong convergence theorems by shrinking and hybrid projection
  methods for relatively nonexpansive mappings in {B}anach spaces},
        date={2009},
   booktitle={Nonlinear analysis and convex analysis},
   publisher={Yokohama Publ., Yokohama},
       pages={7\ndash 26},
      review={\MR{2884574 (2012k:47099)}},
}

\bib{MR2529497}{article}{
      author={Aoyama, Koji},
      author={Kohsaka, Fumiaki},
      author={Takahashi, Wataru},
       title={Strongly relatively nonexpansive sequences in {B}anach spaces and
  applications},
        date={2009},
        ISSN={1661-7738},
     journal={J. Fixed Point Theory Appl.},
      volume={5},
       pages={201\ndash 224},
         url={http://dx.doi.org/10.1007/s11784-009-0108-7},
      review={\MR{2529497 (2010h:47076)}},
}

\bib{MR667060}{article}{
      author={Bruck, Ronald~E.},
       title={Random products of contractions in metric and {B}anach spaces},
        date={1982},
        ISSN={0022-247X},
     journal={J. Math. Anal. Appl.},
      volume={88},
       pages={319\ndash 332},
         url={http://dx.doi.org/10.1016/0022-247X(82)90195-0},
      review={\MR{667060 (84a:47075)}},
}

\bib{MR0470761}{article}{
      author={Bruck, Ronald~E.},
      author={Reich, Simeon},
       title={Nonexpansive projections and resolvents of accretive operators in
  {B}anach spaces},
        date={1977},
        ISSN={0362-1588},
     journal={Houston J. Math.},
      volume={3},
       pages={459\ndash 470},
      review={\MR{0470761 (57 \#10507)}},
}

\bib{MR1972223}{article}{
      author={Kamimura, Shoji},
      author={Takahashi, Wataru},
       title={Strong convergence of a proximal-type algorithm in a {B}anach
  space},
        date={2002},
        ISSN={1052-6234},
     journal={SIAM J. Optim.},
      volume={13},
       pages={938\ndash 945 (electronic) (2003)},
         url={http://dx.doi.org/10.1137/S105262340139611X},
      review={\MR{1972223 (2004c:90096)}},
}

\bib{MR1386686}{incollection}{
      author={Reich, Simeon},
       title={A weak convergence theorem for the alternating method with
  {B}regman distances},
        date={1996},
   booktitle={Theory and applications of nonlinear operators of accretive and
  monotone type},
      series={Lecture Notes in Pure and Appl. Math.},
      volume={178},
   publisher={Dekker, New York},
       pages={313\ndash 318},
      review={\MR{1386686}},
}

\bib{MR1864294}{book}{
      author={Takahashi, Wataru},
       title={Nonlinear functional analysis},
   publisher={Yokohama Publishers, Yokohama},
        date={2000},
        ISBN={4-946552-04-9},
        note={Fixed point theory and its applications},
      review={\MR{1864294 (2002k:47001)}},
}

\end{biblist}
\end{bibdiv}

\end{document}